\definecolor{MyLinkColor}{rgb}{0,0,0.4}
\newcommand{\R}{{\mathbb R}}
\newcommand{\Z}{{\mathbb Z}}
\newcommand{\p}{\partial}
\newcommand{\0}{\Omega}
\begin{document}

%%% THEOREMS %%
\newtheorem{thm}{Theorem}[section]
\newtheorem{prop}[thm]{Proposition}
\newtheorem{cor}[thm]{Corollary}

\arttype{Article} % default 'Article'

\markboth{A.--V. Matioc}
{An explicit solution for deep water waves with Coriolis effects}

%%%%%%%%%%%%%%%%%%%%% Publisher's Area please ignore %%%%%%%%%%%%%%%
%
%\catchline{}{}{}{}{}
%
%%%%%%%%%%%%%%%%%%%%%%%%%%%%%%%%%%%%%%%%%%%%%%%%%%%%%%%%%%%%%%%%%%%%
\copyrightauthor{D. Henry}

\title{{\sc An explicit solution for deep water waves with Coriolis effects}}

\author{\footnotesize ANCA--VOICHITA MATIOC}

\address{Faculty of Mathematics, University of Vienna, \\ Nordbergstra\ss e 15, 1090 Vienna, Austria\\
\email{anca.matioc@univie.ac.at}}

\maketitle

%\begin{history}
%\received{(Day Month Year)}
%\revised{(Day Month Year)}
%\accepted{(Day Month Year)}
%\comby{(xxxxxxxxx)}
%\end{history}

\begin{abstract}
We present an explicit solution for the geophysical equatorial deep water waves in the {\em f}--plane approximation. 
\end{abstract}

\keywords{Gravity deep-water waves, Gerstner's wave, Coriolis effects, Lagrangian coordinates.}

\ccode{2010 Mathematics Subject Classification: Primary: 76B15; Secondary: 74G05, 37N10.}

\section{Introduction}
 When analyzing the motion of an object situated  in a  reference frame rotating with uniform angular velocity one has to take into consideration the Coriolis forces acting upon the object.
Particularly, when studying  water waves traveling over an inviscid  fluid at the Earth's surface, due to the rotation of the Earth around its axis, there are
 Coriolis and centrifugal forces which  appear and influence the motion of the fluid particles, cf. \cite{GS , Pe}. 
These forces add additional terms in the Euler equations and makes their analysis even more involved.

When neglecting the effects caused by the Earth's rotation, there exists an explicit solution for gravity deep water waves which was found first by Gerstner \cite{GE} and later on by  Rankine \cite{Ra}. 
Its features have been analyzed more recently in \cite{C, HD2}.
Gerstner's solution  describes the evolution of each individual fluid particle in the flow: each fluid particle moves  on a circle, and the  radius of the circle decreases with depth.  
Moreover, the flow is rotational and the vorticity decays with depth.

That for Gerstner's wave  fluid particles move on circles is in agreement with the  
 classical description of the
particle paths  within the framework of linear water wave theory \cite{ Cr,  RJ, LA, LI,  St}: all water particles
trace a circular orbit, the diameter of which decreases with depth.
However, it was recently shown within linear theory \cite{CEV, AMat1} that for irrotational periodic water waves the particle paths are not closed. 
Even within the linear water wave theory, the ordinary
differential equations system describing the motion of the fluid particles is nevertheless nonlinear and explicit solutions of
this system are not available.
However, qualitative features of the underlying flow have been obtained in a nonlinear setting in \cite{ConBook, CE2,   CEV,  DHH, AMat1, Mati, TO}.
The particle trajectories and other properties for the flow beneath water waves of finite depth have been obtained  in \cite{C2, CS, CV, HD1, IK, AMat}, to mention just some of the contributions.
  
It is worth mentioning that there exists an explicit solution describing the propagation of edge-waves along a sloping beach which is obtain in \cite{C1} by adapting Gerstner's solution. 
More recently, it was shown in \cite{ST} that Gerstner's idea may be used to construct an explicit solution for non-homogeneous deep water waves and for edge-waves  along a sloping beach.     

In this paper we consider the problem for  gravity deep water waves  in a  reference frame rotating with uniform angular velocity.
Neglecting the influence of centrifugal force, which is small compared to that due to the Coriolis force, and assuming that we are at the Equator, the mathematical formulation we deal with is the so-called $f-$plane approximation, see 
the discussion in \cite{C3} for the physical relevance of this approximation near the Equator.
For this problem we find an explicit  Gerstner's wave solution, which is a periodic wave over a rotational flow. 
Similarly as for waves without Coriolis effects, the particles move on circles and the radii of the circles, as well as the vorticity of the particles, decay with depth.

The outline of the paper is as follows:  after presenting in Section \ref{S:2} the governing equations for our problem we introduce in Section \ref{S:3}, by using a Lagrangian description, the 
explicit Gerstner solution  of the water wave problem in the $f-$plane approximation.
 
\section{The governing equations}\label{S:2}
In a rotating frame work with the origin at a point on earth's surface, with the $x-$axis chosen horizontally due east, the $y-$axis horizontally due north and the $z-$axis  upward, 
let $z=\eta(t,x,y)$ be the surface of the ocean. 
In the region $z\leq \eta(t,x,y)$ the governing equations in the $f-$plane approximation near the Equator are cf. \cite{GS} the Euler equations
\begin{equation}\label{Euler}
\left\{
\begin{array}{rllllll}
u_t+uu_x+vu_y+wu_z+2\omega w&=&-P_x/\rho, \\
v_t+uv_x+vv_y+wv_z&=& -P_y/\rho,\\
w_t+uw_x+vw_y+ww_z-2\omega u&=&-P_z/\rho -g,
\end{array}
\right.
\end{equation}
and, under the assumption of constant density, the equation of mass conservation in the form 
\begin{equation}\label{maco}
u_x+v_y+w_z=0.
\end{equation}
Here $t$ represents time, $(u,v,w)$ is the fluid velocity, $\omega=73\cdot 10^{-6} rad/s$ is the (constant) rotational speed of the Earth\footnote[1]{Taken to be a perfect sphere of radius $6371 km$.} round the polar axis
towards east, $\rho$ is the (constant) density of the water, $g=9,8 m/s^2$ is the (constant) gravitational acceleration  at the Earth's surface, and $P$ is the pressure.  
The free surface decouples the motion of the water from that of the air (see the discussion in \cite{ConBook}),
a fact that is expressed  by the dynamic boundary condition
\begin{equation}\label{Dbc}
P=P_0 \quad \quad \text{on} \quad z=\eta(t,x,y),
\end{equation}
where $P_0$ is the (constant) atmospheric pressure.

In this paper we seek two-dimensional flows, independent upon the $y-$coordinate and with $v \equiv 0$ throughout the flow.
Such flows are possible in this setting; in particular, the vorticity equation (see \cite{Pe}) ensures that the vorticity $\gamma=(0, u_z-w_x, 0)$ is preserved.
By abuse of notation we identify $\gamma$ with the scalar $u_z-w_x.$ 

Since the same particles always form the free surface, we also have the kinematic
boundary condition
\begin{equation}\label{kbc}
w=\eta_t+u\eta_x \quad \quad \text{on} \quad z=\eta(t,x),
\end{equation}
expressing the fact that a particle on the free surface remains confined to it (see the discussion in \cite{ConBook}. 

The boundary condition at the bottom
\begin{equation}\label{boc}
(u,w) \to (0,0) \, \, \text{as} \, z\to -\infty \, \, \text{uniformly for}\, \, x\in \R, \, t\geq 0,
\end{equation}
expresses  the fact that at great depths there is practically no motion. Summarizing, the governing equations for geophysical deep-water
waves  in the $f-$plane approximation are encompassed by the nonlinear free-boundary problem
\begin{equation}\label{fbpb}
\left\{
\begin{array}{rllllllll}
u_t+uu_x+wu_z+2\omega w&=&-P_x/\rho &\quad \text{for} \quad z<\eta(t,x),\\
w_t+uw_x+ww_z-2\omega u&=&-P_z/\rho -g  &\quad \text{for} \quad z<\eta(t,x),\\
u_x+w_z &=& 0  &\quad \text{for} \quad z<\eta(t,x),\\
u_z-w_x&=&\gamma(x,z) &\quad \text{for} \quad z<\eta(t,x),\\
P&=&P_0 &\quad  \text{on} \quad z=\eta(t,x),\\
w&=& \eta_t+u\eta_x &\quad  \text{on} \quad z=\eta(t,x),\\
(u,w) &\to& (0,0)  &\quad  \text{as} \quad z \to -\infty.
\end{array}
\right.
\end{equation}

\section{Gerstner's wave}\label{S:3}
We  prove herein that 
the problem \eqref{fbpb} has an explicit Gerstner-like solution 
solution.
Similarly as for waves without Coriolis effects, in this setting Gerstner's wave is a two-dimensional wave which is defined in Lagrangian framework by describing the evolution of individual water particles.  
The water particles are parametrized  by using two parameters $a\in \R$ and $b\leq b_0$ for a fixed $b_0\leq0.$
 Choosing a particular particle, that is  fixing $a$ and $b,$ its evolution  is described by the following relations  
\begin{equation}\label{Gers}
 \left \{
\begin{array}{rllll}
 x(t,a,b)&=& a-\frac{e^{kb}}{k}\sin(ka-kct), \\[1ex]
 z(t,a,b)&=& b+\frac{e^{kb}}{k}\cos(ka-kct), 
\end{array}
\right.
\end{equation}
where $k>0$ is fixed and $c>0$ has to be determined. The path of this particle is a circle centered at $(a,b)$ with radius $e^{kb}/k,$ with the particle moving clockwise with constant angular speed $kc.$
We can obtain the description of motion of another particle by changing the values of $a$ and $b$ in \eqref{Gers}.

In order to show that \eqref{Gers}
 provides a solution of the the problem \eqref{fbpb} we  prove that:
\begin{enumerate}
 \item there exist a suitable constant $c$ and a pressure function $P$ such that Euler's equations (the first two equations in \eqref{fbpb})  and  the fifth equation in \eqref{fbpb} are satisfied;
 \item the equation of mass conservation (the third equation in \eqref{fbpb})  holds;
 \item a particle on the free surface remains there (the sixth equation in \eqref{fbpb});
 \item the limiting boundary condition \eqref{boc} is satisfied; 
 \item  the vorticity of the flow, i.e. the function  $\gamma,$ is non-zero and decays with depth.
\end{enumerate}

 \subsection{Lagrangian coordinates}
The Lagrangian description of motion is the appropriate one for describing Gerstner's wave. 
In Lagrangian coordinates the variables $x$ and $z,$ denoting the position (in the physical frame) of a particle at time $t,$ are functions of $(a,b).$
Set $\Sigma_0:=\R\times(-\infty,b_0)$ for some $b_0\leq0.$
For each $t\geq0$, we define the mapping 
\begin{align*}
 \Phi(t)(a,b):=(x(t,a,b), z(t,a,b)) \quad \text{for all $  (a,b)\in \Sigma_0,$}
\end{align*}
where $(x(t,a,b), z(t,a,b))$ are given by \eqref{Gers}. The Jacobi matrix of this transformation is then given by
\begin{align}\label{pfi}
 \partial \Phi(t) = 
 \left( \begin{array}{ccc}
x_a & x_b \\
z_a & z_b 
 \end{array} \right)
=
\left( \begin{array}{ccc}
1-e^{kb}\cos(ka-kct) & -e^{kb}\sin(ka-kct)\\
-e^{kb}\sin(ka-kct) &1+e^{kb}\cos(ka-kct) \end{array} \right).
\end{align}
As first result, we state the following lemma, for the proof of which we refer to \cite{ConBook}.
\begin{lemma}\label{dife}
 Given  $t\geq 0,$ the function $\Phi(t)$ defines a real-analytic diffeomorphism from $\Sigma_0$ onto  its image $\0(t).$
Moreover, there exists a function $\eta:\R\to\R,$ which is periodic of period $2\pi/k$ such that 
\[
\0(t)=\{(x,z)\,:\, \text{$x\in\R$  and $z<\eta(t,x):=\eta(x-ct)$}\}.
\] 
\end{lemma}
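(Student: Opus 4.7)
\medskip
\noindent\emph{Proof plan.} The plan is to argue in three stages: first establish local invertibility of $\Phi(t)$ via the Jacobian, then upgrade to a global diffeomorphism by exploiting the periodic and travelling-wave structure of \eqref{Gers}, and finally identify the image $\0(t)$ as the region lying strictly below a graph.

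For the first stage I would note that real-analyticity is immediate from \eqref{Gers}, and compute the determinant of the Jacobi matrix \eqref{pfi}. Expanding the product and using $\sin^2+\cos^2=1$, the trigonometric terms cancel and one obtains
\[
\det\p\Phi(t)=1-e^{2kb},
\]
which is strictly positive on $\Sigma_0$ because $b<b_0\leq 0$. By the inverse function theorem, $\Phi(t)$ is therefore a local real-analytic diffeomorphism at every point of $\Sigma_0$.

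For the second stage, two symmetries of \eqref{Gers} reduce the problem to a manageable one: the periodicity $\Phi(t)(a+2\pi/k,b)=\Phi(t)(a,b)+(2\pi/k,0)$ and the travelling-wave identity $\Phi(t)(a,b)=\Phi(0)(a-ct,b)+(ct,0)$. Together these reduce global injectivity to injectivity of $\Phi(0)$ on a single fundamental strip $[0,2\pi/k)\times(-\infty,b_0)$. For each fixed $b<b_0$, the partial derivative $x_a(0,a,b)=1-e^{kb}\cos(ka)$ is bounded below by $1-e^{kb_0}>0$, so $a\mapsto x(0,a,b)$ is a strictly increasing real-analytic bijection of $\R$ onto itself. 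Hence each particle line $\{\Phi(0)(a,b):a\in\R\}$ is the graph of a real-analytic function $z=z_b(x)$. To conclude global injectivity I would additionally check that distinct $b$-levels produce pairwise disjoint graphs, deriving this from the monotonicity of $z_b(x_0)$ in $b$ at any fixed $x_0\in\R$, which in turn follows by implicit differentiation using $\det\p\Phi(0)=1-e^{2kb}>0$.

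For the third stage, because $\Sigma_0$ is open so is $\0(t)$, and its upper boundary is swept out in the limit $b\uparrow b_0$. Letting $a=a(t,x)$ denote the inverse of $a\mapsto x(t,a,b_0)$, one sets $\eta(t,x):=b_0+(e^{kb_0}/k)\cos\bigl(ka(t,x)-kct\bigr)$. Periodicity with period $2\pi/k$ follows from $a(t,x+2\pi/k)=a(t,x)+2\pi/k$, and the travelling-wave form $\eta(t,x)=\eta(x-ct)$ is a direct consequence of the second symmetry above. The main obstacle will be the non-intersection of distinct particle lines in stage two: the Jacobian by itself only delivers local invertibility, so one must rule out that two parameter pairs with different $b$-coordinates map to the same physical point, and establishing the monotonicity of $z_b(x_0)$ in $b$ is the crux of that step; this is precisely the content that the citation to \cite{ConBook} supplies.
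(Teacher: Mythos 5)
Your proposal is correct in substance, but note that the paper itself gives no proof of this lemma --- it defers entirely to \cite{ConBook} --- so the comparison is with the classical argument found there. That argument obtains global injectivity in one stroke: writing $\Phi(t)=\mathrm{id}+F$ with $F(a,b)=\tfrac{e^{kb}}{k}(-\sin(ka-kct),\cos(ka-kct))$, the differential of $F$ has norm $e^{kb}$, which is $<1$ on the segment joining any two points of $\Sigma_0$; the mean value inequality then gives $|F(p_1)-F(p_2)|<|p_1-p_2|$, so $\Phi(t)(p_1)\neq\Phi(t)(p_2)$, and the identification of the image as an open subgraph follows as in your third stage. Your route --- local invertibility from $\det\p\Phi(t)=1-e^{2kb}>0$, each $b$-level being a graph $z=z_b(x)$ since $x_a>0$, and disjointness of the graphs from monotonicity of $z_b(x_0)$ in $b$ --- is a genuinely different decomposition and it does close: implicit differentiation along $x=x_0$ gives $\frac{d}{db}z_b(x_0)=\det\p\Phi(t)/x_a=(1-e^{2kb})/(1-e^{kb}\cos(ka-kct))>0$, which is exactly the crux you flag, and combined with $z_b(x_0)\to-\infty$ as $b\to-\infty$ it also delivers $\0(t)=\{z<\eta(t,x)\}$ by the intermediate value theorem. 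The contraction argument is shorter and avoids the level-curve bookkeeping; your argument has the advantage of producing the foliation by particle lines and the formula for $\eta$ explicitly. One small slip: your lower bound $x_a\geq 1-e^{kb_0}>0$ fails when $b_0=0$; you should instead use $x_a\geq 1-e^{kb}>0$ for each fixed $b<b_0$, and treat the limiting curve $b=b_0=0$ separately (it is the cycloid with cusps of Remark \ref{R:1}, where $a\mapsto x$ is still a strictly increasing bijection even though $x_a$ vanishes at isolated points).
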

\begin{remark}\label{R:1}
Since the map $a\mapsto x(t,a,b)$ is a bijection from $\R$ to $\R$ its inverse $x^{-1}(t, \cdot, b_0):\R \to \R$ is well defined.
Then, the function $\eta$ is defined by  $\eta(\zeta):=z(0,x^{-1}(0,\zeta,b_0),b_0).$

When $b_0=0,$ then the graph of the function $\eta$, the wave profile, is a cycloid and is real analytic excepting the points where $x=\pi m/k$, $m\in\Z$, while for $b_0<0$
is a real-analytic curve, called trochoid, see \cite{C}. 
This is due to the fact that the wave profile at time $t$, $t\geq0,$ is the image of $\R\times\{b_0\}$ under $\Phi(t).$  
\end{remark}
For further computations, it is useful to determine the Jacobian matrix corresponding to the inverse of $\Phi$
\begin{equation}\label{pfimin}
\begin{array}{rllll}
 \partial \left(\Phi^{-1}(t)\right) &=&
 \left( \begin{array}{ccc}
a_x & a_z \\
b_x & b_z 
 \end{array} \right)\\[2ex]
&=&
\displaystyle \frac{1}{1-e^{2kb}}\left( \begin{array}{ccc}
1+e^{kb}\cos(ka-kct) & e^{kb}\sin(ka-kct) \\
e^{kb}\sin(ka-kct) &1-e^{kb}\cos(ka-kct) \end{array} \right).  
\end{array}
\end{equation}
Setting $\Phi^{-1}(t)(x,z):=(a(t,x,z), b(t,x,z))$ for $ (x, z)\in \Omega(t)$, we have
\begin{equation*}
 \left \{
\begin{array}{rlllll}
 a(t, x(t, a,b), z(t;a,b))&=&a, \\
 b(t, x(t, a,b), z(t;a,b))&=&b. 
\end{array}
 \right.
\end{equation*}
Differentiating now in the last expression with respect to $t$ we obtain 
\begin{equation}\label{abt}
 \left \{
\begin{array}{rlllll}
 a_t+a_xx_t+a_zz_t&=&0, \\
 b_t+b_xx_t+b_zz_t&=&0. 
\end{array}
 \right.
\end{equation}
We look now for a constant $c$ such that the equations of the system \eqref{fbpb} are satisfied in the domains $\0(t)$ for all $t\geq0.$
Since the trajectories of the particle determined by $(a,b)\in\Sigma_0$ is given by \eqref{Gers}, the components of the velocity vector are found by differentiating these relations with respect to time 
\begin{equation}\label{Lag}
 \left \{
\begin{array}{rlllll}
 u(t,x,z)&=&x_t(t,a(t,x,z),b(t,x,z)), \\
 w(t,x,z)&=&z_t(t,a(t, x,z),b(t,x,z)). 
\end{array}
 \right.
\end{equation}
Differentiating in \eqref{Lag} with respect to $t, x$ and $z$, respectively, we get the following derivatives of $u$ and $w$
 \begin{equation}\label{u+w}
\left\{  
\begin{array}{rlllll}
   u_t&=& x_{tt}+x_{ta}a_t+x_{tb}b_t, \\
   u_x&=& x_{ta}a_x+x_{tb}b_x,\\
   u_z&=& x_{ta} a_z+x_{tb}b_z,
  \end{array}
\right.
\qquad\text{and}\qquad
\left\{  
\begin{array}{rlllll}
   w_t&=&z_{tt}+z_{ta}a_t+z_{tb}b_t, \\
   w_x&=& z_{ta}a_x+z_{tb}b_x,\\
   w_z&=& z_{ta} a_z+z_{tb}b_z.
  \end{array}
\right.
\end{equation}
\begin{lemma}\label{tt}
 Given $t\geq0,$ we have that 
 \begin{equation}\label{x+ztt}
  u_t+uu_x+wu_z=x_{tt}
\qquad\text{and}\qquad
  w_t+uw_x+ww_z=z_{tt}\qquad\text{in $\0(t).$}
 \end{equation}
\end{lemma}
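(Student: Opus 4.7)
The plan is a direct chain-rule calculation exploiting \eqref{u+w} and \eqref{abt}. I will treat only the first identity in detail, since the argument for $z_{tt}$ is word-for-word identical with $x$ replaced by $z$.

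First I would substitute the three formulas from the left column of \eqref{u+w} into $u_t+uu_x+wu_z$. Since $u=x_t$ and $w=z_t$ when evaluated along a particle trajectory (this is exactly \eqref{Lag}), this substitution yields
\begin{equation*}
u_t+uu_x+wu_z
=x_{tt}+x_{ta}\bigl(a_t+x_t a_x+z_t a_z\bigr)+x_{tb}\bigl(b_t+x_t b_x+z_t b_z\bigr).
\end{equation*}
Next I would invoke \eqref{abt}: both parenthesized expressions vanish identically, being precisely the relations obtained by differentiating $a(t,x(t,a,b),z(t,a,b))=a$ and $b(t,x(t,a,b),z(t,a,b))=b$ in $t$. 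The two correction terms therefore drop out, leaving $u_t+uu_x+wu_z=x_{tt}$ as claimed.

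The analogous manipulation using the right column of \eqref{u+w} and again the two identities in \eqref{abt} gives $w_t+uw_x+ww_z=z_{tt}$. To be fully rigorous I would remark that both sides of each identity are to be interpreted as functions on $\Omega(t)$: the right-hand side $x_{tt}$ is the function $(x,z)\mapsto x_{tt}(t,a(t,x,z),b(t,x,z))$, which is well defined thanks to Lemma \ref{dife}.

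There is no real obstacle here: the only conceptual point is the interpretation of $a_t,b_t$ as partial derivatives of the inverse map $\Phi^{-1}(t)$ at a fixed spatial point $(x,z)$, so that the chain-rule identities \eqref{abt} may be read as the vanishing of the material derivatives of $a$ and $b$ along particle paths. Once this is kept clear, the lemma is a one-line consequence of \eqref{u+w} and \eqref{abt}.
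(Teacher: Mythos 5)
Your proof is correct and follows essentially the same route as the paper: both substitute the expressions \eqref{u+w} into the material derivative and then use the two identities in \eqref{abt} (the paper multiplies them by $x_{ta}$ and $x_{tb}$ and substitutes, you factor the same combinations out directly) to cancel all terms except $x_{tt}$, respectively $z_{tt}$. Your closing remark on interpreting $a_t,b_t$ via the inverse map $\Phi^{-1}(t)$ is a welcome clarification but does not change the argument.
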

\begin{proof}
 Using the relation \eqref{u+w}, we get
\begin{align*}
 u_t+uu_x+wu_z=&x_{tt}+x_{ta}a_t+x_{tb}b_t+x_tx_{ta}a_x+x_tx_{tb}b_x+z_tz_{ta}a_z+z_tx_{tb}b_z.
\end{align*}
Furthermore, if we multiply the first relation in \eqref{abt} with $x_{ta}$ and the second one by $x_{tb}$ we find
\begin{align*}
 x_{ta}a_t=-x_tx_{ta}a_x-z_tx_{ta}a_z\qquad\text{and}\qquad x_{tb}b_t=-x_tx_{tb}b_x-z_tx_{tb}b_z.
\end{align*}
Plugging these expressions into the previous relation, we obtain the first identity in \eqref{x+ztt}.
 The second identity is obtained in a  similar manner. 
\end{proof}
In order to verify Euler's equations, we have to determine  also a  function $P=P(t,x,z)$ for the pressure.
By the change of variables between the Lagrangian and Eulerian  coordinates we have $P_x=P_a a_x+P_b b_x$ and $P_z=P_a a_z+P_b b_z,$  
so that  the first two equations in \eqref{fbpb} are equivalent with
\begin{equation}\label{eul}
 \left\{
\begin{array}{rllll}
 \rho x_{tt}+2 \omega \rho z_t&=& -P_aa_x-P_b b_x,\\
 \rho z_{tt}-2 \omega \rho x_t&=& -P_a a_z-P_b b_z-g\rho.
\end{array}
\right.
\end{equation}
\begin{lemma}\label{press}
The speed of the wave is 
\begin{align}\label{c}
 c:=\frac{\sqrt{\omega^2+kg}-\omega}{k}
\end{align}
 and the pressure is given by the following expression
\begin{align}\label{presu}
 P=P_0+\frac{\rho(kc^2+2\omega c)}{2k}\left(e^{2kb}-e^{2kb_0}\right)-\rho g (b-b_0). 
\end{align}
\end{lemma}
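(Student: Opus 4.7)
The plan is to substitute the parametrization \eqref{Gers} into the Lagrangian form \eqref{eul} of Euler's equations, identify $c$ by forcing compatibility with the dynamic boundary condition $P=P_0$ on $b=b_0$, and finally integrate to recover $P$.

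First I would compute from \eqref{Gers} the kinematic quantities $x_t=ce^{kb}\cos(ka-kct)$, $z_t=ce^{kb}\sin(ka-kct)$, $x_{tt}=kc^2e^{kb}\sin(ka-kct)$, $z_{tt}=-kc^2e^{kb}\cos(ka-kct)$. Rather than inverting \eqref{pfimin} explicitly, I would exploit the structure of \eqref{pfi}: multiply the first equation in \eqref{eul} by $x_a$ and the second by $z_a$ and add (and do the same with $x_b$, $z_b$ for the $b$-derivative). The right-hand sides then collapse to $-P_a-g\rho z_a$ and $-P_b-g\rho z_b$, respectively. A short computation, in which the cross terms of the form $e^{2kb}\sin(ka-kct)\cos(ka-kct)$ cancel pairwise due to the sign pattern in \eqref{pfi}, leaves
\begin{align*}
P_a&=\rho\bigl[g-(kc^2+2\omega c)\bigr]\,e^{kb}\sin(ka-kct),\\
P_b&=\rho\bigl[(kc^2+2\omega c)-g\bigr]\,e^{kb}\cos(ka-kct)+\rho(kc^2+2\omega c)\,e^{2kb}-g\rho.
\end{align*}

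The decisive step is to observe that the dynamic boundary condition requires $P\equiv P_0$ on the free surface, which by Lemma \ref{dife} and Remark \ref{R:1} is the image under $\Phi(t)$ of $\{b=b_0\}$; hence $P_a$ must vanish there identically in $a$. This forces the dispersion relation $kc^2+2\omega c=g$, whose unique positive root is precisely \eqref{c}. With this choice, $P_a\equiv 0$ throughout $\Sigma_0$, so $P$ depends only on $b$, the oscillatory term in $P_b$ disappears, and a single antiderivative combined with $P|_{b=b_0}=P_0$ yields exactly \eqref{presu}. The mixed-partial identity $P_{ab}=P_{ba}$ is then automatic and provides a consistency check.

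I expect the main difficulty to be essentially book-keeping: one has to track the many $e^{kb}$ and $e^{2kb}$ terms carefully and verify that the combinations $x_a x_{tt}+z_a z_{tt}$ and $x_a z_t-z_a x_t$ (and their $b$-counterparts arising from $x_b$, $z_b$) simplify as claimed. Once this cancellation is in place, the Coriolis contribution $2\omega z_t$ in the first Euler equation combines cleanly with the centripetal acceleration term $x_{tt}$ to produce the modified dispersion relation $kc^2+2\omega c=g$---which is the only genuinely new feature compared with the classical Gerstner wave in the non-rotating case.
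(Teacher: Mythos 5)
Your proposal is correct and follows essentially the same route as the paper: contracting the Lagrangian Euler system \eqref{eul} with the columns $(x_a,z_a)$ and $(x_b,z_b)$ via the chain-rule identities $a_xx_a+a_zz_a=1$, $b_xx_a+b_zz_a=0$ is exactly the paper's use of $[\p\Phi^{-1}(t)]^{-1}=\p\Phi(t)$ to obtain \eqref{presab1}--\eqref{presab2}, and your expressions for $P_a$ and $P_b$, the dispersion relation $kc^2+2\omega c=g$ forced by $P_a=0$ on $b=b_0$, and the final integration in $b$ all coincide with the paper's computation.
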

\begin{proof}
Writing the system \eqref{eul} in matrix form we have
\begin{align*}
 \left(P_a \quad P_b \right) \left( \begin{array}{ccc}
a_x & a_z \\
 b_x & b_z 
 \end{array} \right) = \left(-\rho x_{tt}-2\omega \rho z_t  \quad -\rho z_{tt}+2\omega \rho x_t-g\rho  \right),
\end{align*}
and, using the relation $[\p\Phi^{-1}(t)]^{-1}=\p\Phi(t)$, we find the following equivalent formulation of \eqref{eul}: 
\begin{align}
 P_a&= -(\rho x_{tt}+2 \omega \rho z_t)x_a-(\rho z_{tt}-2 \omega \rho x_t+g\rho) z_a, \label{presab1}\\
 P_b&= -(\rho x_{tt}+2 \omega \rho z_t)x_b-(\rho z_{tt}-2 \omega \rho x_t+g\rho) z_b. \label{presab2}
\end{align}
Taking into account that 
\begin{equation}\label{timeder}
\begin{array}{rllll}
&x_t=ce^{kb}\cos(ka-kct), \qquad  &x_{tt}=kc^2e^{kb}\sin(ka-kct),\\
&z_t=ce^{kb}\sin(ka-kct), \qquad  &z_{tt}=-kc^2e^{kb}\cos(ka-kct),
\end{array} 
 \end{equation}
and using \eqref{pfi}, the relation \eqref{presab1} may be written the following form
\begin{align*}
 P_a=& [e^{kb}\cos(ka-kct)][( k\rho^2 e^{kb}\sin(ka-kct)+2\omega \rho c e^{kb}\sin(ka-kct)] \\
&-e^{kb}\sin(ka-kct)[gkc^2e^{kb}\cos(ka-kct)+2\omega \rho c e^{kb}\cos(ka-kct)-g\rho]\\
=&\rho e^{kb}\sin(ka-kct)[g-kc^2-2\omega c].
\end{align*}
Note from the fifth relation in \eqref{fbpb} that   pressure does not depending on the variable $x$ on the wave surface $z=\eta(x-ct)$.
Therefore, cf. Lemma \ref{dife}, it should be independent of $a$, at least when $b=b_0.$
This argument  leads us to the following restriction 
\begin{align}\label{quac}
 kc^2+2\omega c-g=0,
\end{align}
which is a quadratic equation in $c.$
 Solving this equation and taking into account that the velocity $c$ has to be positive we obtain that $c$ is given by relation \eqref{c}, and therewith  $P_a=0$ in $\0(t)$ for all $t\geq0.$ 
 Using  \eqref{quac} and \eqref{pfi},  relation \eqref{presab2} may be transformed as follows
\begin{align*}
 P_b=&e^{kb}\sin(ka-kct) [\rho kc^2 e^{kb}\sin(ka-kct)+2 \omega \rho c e^{kb} \sin(ka-kct)] \\
&+[1+e^{kb}  \cos(ka-kct)] [\rho k c^2 e^{kb}\cos(ka-kct)+2\rho\omega c e^{kb}  \cos(ka-kct)-\rho g]\\
=&\rho e^{2kb}(kc^2+2\omega c)-\rho g.
\end{align*}
 We integrate now the  last expression with respect to  $b$, and since $P=P_0$ on the wave surface $b=b_0,$ we find the desired expression for the pressure.
\end{proof}
In order to prove that the fluid is incompressible we need the following relations
\begin{equation}\label{mixed}
\begin{array}{rllll}
 &x_{at}=-kce^{kb}\sin(ka-kct), \qquad &x_{at}=kce^{kb}\cos(ka-kct),\\
 &x_{bt}=kce^{kb}\cos(ka-kct),\qquad &x_{bt}=kce^{kb}\sin(ka-kct).
\end{array} 
\end{equation}
\begin{lemma}\label{incomp}
The fluid is incompressible, that is for all $t\geq0$ we have
\[
 u_x+w_z=0\qquad\text{in $\0(t)$}.
\] 
\end{lemma}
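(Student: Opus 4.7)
The plan is to verify $u_x + w_z = 0$ in $\0(t)$ by a direct substitution using the chain-rule formulas already assembled. From the third line of \eqref{u+w} one reads
\[
u_x + w_z \;=\; x_{ta}\,a_x + x_{tb}\,b_x + z_{ta}\,a_z + z_{tb}\,b_z.
\]
The mixed partials $x_{ta}, z_{ta}, x_{tb}, z_{tb}$ are supplied by \eqref{mixed} (obtained by differentiating the time-derivatives in \eqref{timeder} with respect to $a$ and $b$), and the Jacobian entries $a_x, a_z, b_x, b_z$ of $\Phi^{-1}(t)$ by \eqref{pfimin}. Substituting and factoring the common prefactor $kc\,e^{kb}/(1-e^{2kb})$, the sum collapses to a bracket in $\sin(ka-kct)$ and $\cos(ka-kct)$; expanding the four products and collecting terms, the pure $\sin$ contributions cancel against one another and so do the two remaining $\sin\cos$ contributions, yielding zero.

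A more conceptual route that I would run in parallel as a sanity check is to compute the Jacobian of $\Phi(t)$ directly from \eqref{pfi}:
\[
\det\bigl(\p\Phi(t)\bigr) \;=\; x_a z_b - x_b z_a \;=\; 1 - e^{2kb},
\]
using only $\cos^2 + \sin^2 = 1$. The decisive observation is that this determinant depends on neither $t$ nor $a$. Coupling this with the classical Lagrangian-to-Eulerian transport identity $\p_t \det(\p\Phi) = \det(\p\Phi)\cdot(u_x+w_z)\circ\Phi$, together with the strict positivity $1-e^{2kb} > 0$ on the interior of $\Sigma_0$ (since $b_0 \leq 0$ and $b < b_0$), forces $u_x + w_z \equiv 0$ in $\0(t)$.

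I do not anticipate a substantive obstacle; incompressibility for Gerstner's ansatz is essentially a bookkeeping exercise, and the Coriolis terms play no role here since they enter Euler's equations but not the continuity equation. The only care required is with signs in the trigonometric cancellation, and, for the Jacobian approach, to note that the diffeomorphism property from Lemma \ref{dife} legitimizes the use of the transport identity for the volume element.
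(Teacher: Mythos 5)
Your first route is precisely the paper's proof: the author likewise reads off $u_x+w_z=x_{ta}a_x+x_{tb}b_x+z_{ta}a_z+z_{tb}b_z$ from \eqref{u+w} and evaluates it to zero by substituting \eqref{mixed} and \eqref{pfimin}, so on that score your proposal is correct and identical in method. Your parallel sanity check is a genuinely different argument that the paper does not use here: computing $\det\bigl(\p\Phi(t)\bigr)=1-e^{2kb}$ from \eqref{pfi}, noting it is independent of $t$ and strictly positive on $\Sigma_0$ (since $b<b_0\leq 0$), and invoking the transport identity for the volume element. That route is arguably cleaner --- it replaces the sign-sensitive trigonometric cancellation by the single identity $\cos^2+\sin^2=1$ and makes transparent why the Coriolis terms are irrelevant --- at the price of quoting Liouville's formula, whose use is indeed legitimized by the diffeomorphism property in Lemma \ref{dife}; it is essentially the argument given in \cite{ConBook} for the classical Gerstner wave. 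Either route is complete and correct.
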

\begin{proof}
Using the relations \eqref{pfimin} and \eqref{mixed} we find 
\begin{align*}
 u_x+w_z=& x_{ta}a_x+x_{tb}b_x+z_{ta}a_z+z_{tb}b_z=0.
\end{align*}
\end{proof}
In the next lemma we will show  that the flow is rotational and that the vorticity decays with depth.
\begin{lemma}\label{vorti}
 Given $(a,b)\in\Sigma_0$, the vorticity of the water particle determined by this coordinates  depends only on the parameter $b$ and is given by
\begin{align}\label{vort}
 \gamma=-\frac{2kce^{2kb}}{1-e^{2kb}}.
\end{align}
\end{lemma}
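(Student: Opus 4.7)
The plan is to compute $\gamma = u_z - w_x$ directly from the Lagrangian representation, exploiting the formulas already recorded in the preceding computations. From the last two identities of the left block of \eqref{u+w} and the right block of \eqref{u+w} one reads
\begin{equation*}
u_z - w_x = x_{ta}a_z + x_{tb}b_z - z_{ta}a_x - z_{tb}b_x,
\end{equation*}
so the task is reduced to substituting the explicit expressions for the eight factors appearing on the right-hand side and simplifying.

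The mixed time-space derivatives $x_{ta}, x_{tb}, z_{ta}, z_{tb}$ are given by (the corrected form of) \eqref{mixed}, namely
\begin{equation*}
x_{ta}=-kce^{kb}\sin(ka-kct), \quad x_{tb}=kce^{kb}\cos(ka-kct),
\end{equation*}
\begin{equation*}
z_{ta}=kce^{kb}\cos(ka-kct), \quad z_{tb}=kce^{kb}\sin(ka-kct),
\end{equation*}
while the entries $a_x, a_z, b_x, b_z$ of the inverse Jacobian are read off from \eqref{pfimin}. First I would compute $u_z$ by grouping the common factor $kce^{kb}/(1-e^{2kb})$ and using $\sin^{2}(ka-kct)+\cos^{2}(ka-kct)=1$ to obtain
\begin{equation*}
u_z=\frac{kce^{kb}}{1-e^{2kb}}\bigl[\cos(ka-kct)-e^{kb}\bigr],
\end{equation*}
and analogously
\begin{equation*}
w_x=\frac{kce^{kb}}{1-e^{2kb}}\bigl[\cos(ka-kct)+e^{kb}\bigr].
\end{equation*}

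Subtracting, the $\cos(ka-kct)$ terms cancel and one is left with
\begin{equation*}
\gamma=u_z-w_x=-\frac{2kce^{2kb}}{1-e^{2kb}},
\end{equation*}
which is \eqref{vort}. Since this expression depends only on $b$, the vorticity assigned to the particle labeled by $(a,b)$ is indeed a function of $b$ alone, and because $b<0$ and $c>0$ one has $\gamma<0$ with $|\gamma|\to 0$ as $b\to-\infty$, so the vorticity decays with depth.

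The proof is essentially a bookkeeping exercise; the only step that requires attention is keeping track of the signs and confirming that the two $\cos(ka-kct)$ contributions coming from $u_z$ and $w_x$ cancel exactly while the two $e^{kb}$ contributions add up. No estimates, limits, or nontrivial analytic arguments are needed, since \eqref{pfimin}, \eqref{u+w} and the computation of the time derivatives in \eqref{timeder} already provide closed-form expressions for every quantity involved.
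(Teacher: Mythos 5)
Your proposal is correct and follows exactly the same route as the paper: both compute $\gamma=u_z-w_x=x_{ta}a_z+x_{tb}b_z-z_{ta}a_x-z_{tb}b_x$ by substituting \eqref{pfimin} and the (correctly relabelled) mixed derivatives from \eqref{mixed}, the only difference being that you display the intermediate expressions for $u_z$ and $w_x$ which the paper compresses into one line. Your reading of \eqref{mixed} as containing typographical errors in the left-hand labels (the second column should be $z_{at}$ and $z_{bt}$) is also right, and your final simplification matches \eqref{vort}.
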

\begin{proof}
 Gathering \eqref{pfimin}, \eqref{u+w}, and \eqref{mixed}, it follows by direct computations that
\begin{align*}
 \gamma=&u_z-w_x=x_{ta}a_z+x_{tb}b_z-z_{ta}a_x-z_{tb}b_x=-\frac{2kce^{2kb}}{1-e^{2kb}}.
\end{align*}
We note that the vorticity function is negative and is strictly decreasing as a function of $b$. 
Particularly, when $b\to-\infty,$ that is at big depths, the vorticity of the flow decreases to zero.
\end{proof}
We verify next the kinematic boundary condition, i.e. the sixth equation of system \eqref{fbpb}.
To this end, we recall  that  the fluid's surface $z=\eta(t,x)=\eta(x-ct) $ consists on the same particles $(x(t,a,b_0),y(t,a,b_0))$, cf. Remark \ref{R:1}.   
\begin{lemma}\label{free}
Given $t\geq0,$ we have that
\begin{align}\label{dybc}
w=\eta_t+u\eta_x \quad \text{on $ z=\eta(t,x)$}.
\end{align}
\end{lemma}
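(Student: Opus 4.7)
The plan is to exploit the fact that, by Lemma \ref{dife} and Remark \ref{R:1}, the free surface is by construction a \emph{material} surface: it consists of exactly those particles with label $b=b_0$. So the kinematic boundary condition should follow essentially from the chain rule applied to the parametrization.

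More concretely, I would denote by $A(t,x)$ the inverse of $a\mapsto x(t,a,b_0)$, so that the identity $x=x(t,A(t,x),b_0)$ holds, and by construction $\eta(t,x)=z(t,A(t,x),b_0)$. Differentiating the identity $x=x(t,A(t,x),b_0)$ with respect to $t$ and to $x$ yields
\begin{align*}
 A_t=-\frac{x_t}{x_a}\Big|_{(t,A(t,x),b_0)},\qquad A_x=\frac{1}{x_a}\Big|_{(t,A(t,x),b_0)}.
\end{align*}
Next I would apply the chain rule to the defining relation for $\eta$ to obtain
\begin{align*}
 \eta_t=z_t+z_aA_t=z_t-\frac{z_a x_t}{x_a},\qquad \eta_x=z_aA_x=\frac{z_a}{x_a},
\end{align*}
where all right-hand sides are evaluated at $(t,A(t,x),b_0)$.

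Finally, on $z=\eta(t,x)$ the velocity components are given by \eqref{Lag}, which at the surface reduce to $u=x_t(t,A(t,x),b_0)$ and $w=z_t(t,A(t,x),b_0)$. Substituting these into $\eta_t+u\eta_x$ gives
\begin{align*}
 \eta_t+u\eta_x=z_t-\frac{z_ax_t}{x_a}+x_t\cdot\frac{z_a}{x_a}=z_t=w,
\end{align*}
which is \eqref{dybc}. The only point requiring care is the invertibility of $a\mapsto x(t,a,b_0)$ used to define $A$, but this is supplied by Lemma \ref{dife}; apart from that, the argument is pure chain rule and no further computation with the explicit form \eqref{Gers} is needed.
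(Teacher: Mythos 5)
Your proof is correct, but it follows a genuinely different route from the paper's. The paper first uses the travelling-wave structure $\eta(t,x)=\eta(x-ct)$ from Lemma \ref{dife} to replace $\eta_t$ by $-c\eta_x$, reducing \eqref{dybc} to $w=(u-c)\eta_x$; it then computes $\eta_x=z_a/x_a$ at $b=b_0$ explicitly from \eqref{pfi} and verifies the identity by substituting the trigonometric expressions for $u$ and $w$ from \eqref{Gers}. You instead never invoke the travelling-wave form or the explicit formulas: you treat the surface purely as the material curve $\Phi(t)(\R\times\{b_0\})$, differentiate the identity $x=x(t,A(t,x),b_0)$ to get $A_t=-x_t/x_a$ and $A_x=1/x_a$, and let the chain rule do the cancellation. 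Your argument is more conceptual and more general --- it shows that \emph{any} Lagrangian flow for which the free surface is the image of a fixed label curve automatically satisfies the kinematic condition, which is really the content of the statement --- whereas the paper's computation is shorter once the travelling-wave form is in hand and doubles as a sanity check on the explicit solution. The only caveats, which apply equally to both arguments, are that the division by $x_a$ requires $x_a\neq0$ (guaranteed for $b_0<0$; for $b_0=0$ it fails only at the isolated cusp points of the cycloid), and that the identification $\eta(t,x)=z(t,A(t,x),b_0)$ must be drawn from Remark \ref{R:1}, as you note.
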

\begin{proof}
In order to show that the relation \eqref{dybc} is satisfied we observe that \eqref{dybc} is equivalent to the relation $w=(u-c)\eta_x$.
By the definition of $\eta$ we have:
\[
\eta_x(x(t,a,b_0))=\frac{z_a}{x_a}(t,a,b_0)=-\frac{e^{kb_0}\sin(ka-kct)}{1-e^{kb_0}\cos(ka-kct)},
\]
cf. \eqref{pfi}.
Using \eqref{Lag} and the previous relation we obtain that
\begin{align*}
 w -(u -c)\eta_x =&ce^{kb_0}\sin(ka-kct)\\
&+c(e^{kb_0}\cos(ka-kct)-1)\frac{e^{kb_0}\sin(ka-kct)}{1-e^{kb_0}\cos(ka-kct)}\\
=&0,
\end{align*}
 and therefore the kinematic surface condition is satisfied.
\end{proof}
Regarding the limiting boundary condition, we note that
\begin{align*}
 (u,w)(t, x, z)=(ce^{kb}\cos(ka-kct), ce^{kb}\sin(ka-kct)), \qquad (x,z)\in\0(t),
\end{align*}
whereby $(a,b)=(a,b)(t,x,z).$
Since $b(t,x,z)\searrow-\infty$ when $z\searrow-\infty,$ we deduce that $(u,w)(t,x,w)\to0$ as $z\to-\infty.$
This shows that Gerstner's wave give by the Lagrangian description \eqref{Gers} is indeed an explicit solution of the $f-$plane approximation \eqref{fbpb}.

\vspace{0.7cm}
\hspace{-0.5cm}{\large \bf Acknowledgement}\\[2ex]
This research has been supported by the FWF Project I544 --N13 ``Lagrangian kinematics of water waves'' of the Austrian Science Fund.\\

\end{document}